\newtheorem{theorem}{Теорема}
\newtheorem{definition}{Определение}
\newtheorem{proposition}{Предложение}
\thanks{\Large Published in: Siberian Electronic Mathematical Reports, 2018,  Vol. 15, pp. 1--10.\\
DOI 10.17377/semi.2018.15.001\\
http://semr.math.nsc.ru/v15/p1-10.pdf}
\title[On fractional powers of the Bessel operator on a semiaxis]
{On fractional powers of the Bessel operator on a semiaxis}
\author[E.L. Shishkina, S.M. Sitnik]{Elina Shishkina\;and Sergei M. Sitnik}
\address{Elina Shishkina\newline
Voronezh State University, Voronezh,Russia.}
\email{ilina\_dico@mail.ru}
\address{Sergei M. Sitnik\newline
Belgorod State National Research University (BSU), Belgorod, Russia.}
\email{Sitnik@bsu.edu.ru\newline
\newline
~
----------------------------------------------------------------------------------\newline
~}
\address{Элина Шишкина\newline
Воронежский государственный университет, Воронеж, Россия.}
\email{ilina\_dico@mail.ru}
\address{Сергей Ситник\newline
Белгородский государственный национальный исследовательский университет\newline (БелГУ), Белгород, Россия.}
\email{Sitnik@bsu.edu.ru}
\keywords{Bessel operator, fractional integral,
fractional derivative, Mellin transform.}
\subjclass[2010]{26A33, 44A15.}
\begin{document}

\maketitle {\small

\begin{quote}
\noindent{\sc Abstract. } In this paper we study fractional powers of the Bessel differential operator defined on a semiaxis.  Some important properties of such fractional powers of the Bessel differential operator are proved. They include connections with Legendre functions for kernel representations, fractional integral operators of Liouville and Saigo, Mellin transform and index laws. Possible applications are indicated to differential equations with fractional powers of the Bessel differential operator.

\medskip

 \end{quote}
}

\section{Introduction}

We consider real powers of a singular Bessel operator
\begin{equation}\label{Bess}
B_\nu= D^2+\frac{\nu}{x}D,\qquad \nu\geq 0
\end{equation}
on a semiaxis $(0,\infty)$.

Let $\alpha>0$, $f(x)\in C^{[2\alpha]+1}(0,\infty)$.
We define fractional powers of a singular Bessel operator  on a semiaxis $(0,\infty)$ by the formulae (cf. [1]--[4], [18])
\begin{equation}\label{Bess3}
(B_{\nu,-}^{-\alpha}\,f)(x){=}\frac{1}{\Gamma(2\alpha)}\int\limits_x^{+\infty}\left(\frac{y^2-x^2}{2y}
\right)^{2\alpha-1}\,_2F_1\left(\alpha+\frac{\nu-1}{2},\alpha;2\alpha;1-\frac{x^2}{y^2}\right)f(y)dy.
\end{equation}
For brevity we shall also use a term fractional Bessel operator for \eqref{Bess3}.

In the paper we derive a formulae for \eqref{Bess3} via the Legendre functions, reduce it to the fractional Riemann--Liouville operator for  $\nu=0$, establish its connections with Saigo integral operator, find conditions under which \eqref{Bess3} is the left inverse for \eqref{Bess}. We also will find the Mellin transform for  \eqref{Bess3} and prove semigroup property for a fractional Bessel operator on a semiaxis.

Using a formulae connecting the Gauss hypergeometric function and the Legendre function of the form
$$
_2F_1(a,b;2b;z)=2^{2b-1}\Gamma\left(b+\frac{1}{2}\right)\, z^{\frac{1}{2}-b}(1-z)^{\frac{1}{2}\left(b-a-\frac{1}{2}\right)}P_{a-b-\frac{1}{2}}^{\frac{1}{2}-b}
\left[\left(1-\frac{z}{2}\right)\sqrt{1-z}\right]
$$
(cf. formulae 15.4.8, page 561, [5]), we derive
$$
\,_2F_1\left(\alpha+\frac{\nu-1}{2},\alpha;2\alpha;1-\frac{x^2}{y^2}\right)=
$$
$$
=2^{2\alpha-1}\Gamma\left(\alpha+\frac{1}{2}\right)\, \left(\frac{y^2-x^2}{y^2}\right)^{\frac{1}{2}-\alpha}\left(\frac{y}{x}
\right)^{\frac{\nu}{2}}P_{\frac{\nu}{2}-1}^{\frac{1}{2}-\alpha}
\left[\frac{1}{2}\left(\frac{x}{y}+\frac{y}{x}\right)\right]
$$
and so reduce \eqref{Bess3} to
$$
(B_{\nu,-}^{-\alpha}f)(x)
=\frac{\Gamma\left(\alpha+\frac{1}{2}\right)}{\Gamma(2\alpha)}\int\limits_x^b(y^2-x^2)^{\alpha-\frac{1}{2}}
\,\left(\frac{y}{x}\right)^{\frac{\nu}{2}}P_{\frac{\nu}{2}-1}^{\frac{1}{2}-\alpha}\left[\frac{1}{2}
\left(\frac{x}{y}+\frac{y}{x}\right)\right]f(y)dy.
$$

Reducing of fractional Bessel operators via the Legendre functions is a useful simplification as it is a representation via two--parameter Legendre functions instead of three--parameter Gauss hypergeometric functions.

In this paper we study the fractional Bessel operator on a semiaxis of the form \eqref{Bess3}, which is a generalization of the Liouville fractional integral. Another version of the fractional Bessel operator on a finite interval with an integral over $(0,x)$, which is a generalization of the Riemann--Liouville fractional integral, and other modifications were studied in [2]--[4].

We define \eqref{Bess3} under function conditions close to optimal. But below for simplicity we consider infinitely differentiable functions which are finite on  a semiaxis, namely with a support ${\rm supp}\,{f(x)}=[a,b], 0<a<b<\infty$.

\section{Basic properties of fractional powers of the Bessel operator on  a semiaxis}

In this section we obtain basic properties of operator \eqref{Bess3}, firstly, we demonstrate the connection between fractional  Bessel integral  on  a semiaxis with Liouville fractional integral and Saigo fractional integral, secondly, we show that when $\alpha=1$ the operator \eqref{Bess3} inverses the Bessel operator \eqref{Bess} with some additional conditions, and finally, we find  fractional  Bessel integral \eqref{Bess3} of power function.

\textbf{Property 1.} When $\nu=0$ fractional  Bessel integral  on  a semiaxis  $B_{0,-}^{-\alpha}$ is Liouville fractional integral
defined by the formula (5.3) p. 85 from [6]. Namely we have
$$
(B_{0,-}^{-\alpha}f)(x)=\frac{1}{\Gamma(2\alpha)}\int\limits_x^{\infty}(y-x)^{2\alpha-1}f(y)dy=(I_{-}^{2\alpha}f)(x).
$$

\textbf{Proof.} Actually we have
  $$(B_{0,-}^{-\alpha}f)(x)=\frac{1}{\Gamma(2\alpha)}\int\limits_x^\infty\left(\frac{y^2-x^2}{2y}\right)^{2\alpha-1}
  \,_2F_1\left(\alpha-\frac{1}{2},\alpha;2\alpha;1-\frac{x^2}{y^2}\right)f(y)dy.
 $$
Using a formulae following from  the integral representation for the Gauss hypergeometric function
$$
\,_2F_1\left(\alpha-\frac{1}{2},\alpha;2\alpha;1-\frac{x^2}{y^2}\right)=\left[\frac{2y}{x+y}\right]^{2\alpha-1},
$$
we derive
$$
(B_{0,-}^{-\alpha}f)(x)=\frac{1}{\Gamma(2\alpha)}\int\limits_x^\infty(y-x)^{2\alpha-1}f(y)dy=(I_{-}^{2\alpha}f)(x).
$$

\textbf{Property 2.} The next equality is valid
\begin{equation}\label{Prop2}
(B_{\nu,-}^{-\alpha}f)(x)=\frac{1}{2^{2\alpha}}J_{x^2}^{2\alpha,\frac{\nu{-}1}{2}-\alpha,-\alpha}\left(x^{\frac{\nu{-}1}{2}}f(\sqrt{x})\right),
\end{equation}
where
\begin{equation}\label{Saigo1}
J_{x}\,^{\gamma,\beta,\eta}f(x)=\frac{1}{\Gamma(\gamma)}\int\limits_x^\infty(t-x)^{\gamma-1}t^{-\gamma-\beta}\,_2F_1\left(\gamma+\beta,-\eta;\gamma;1-\frac{x}{t}\right)f(t)dt,
\end{equation}
is the Saigo fractional integral  (see [7], [9]). In \eqref{Saigo1} $\gamma>0,\beta,\theta$ are reals.

\textbf{Proof.}
Use change of a variable $y^2=t$ in the fractional Bessel operator \eqref{Bess3} we derive:
$$
(B_{\nu,-}^{-\alpha}f)(x)
=\frac{1}{2^{2\alpha}\Gamma(2\alpha)}\int\limits_{x^2}^\infty(t{-}x^2)^{2\alpha-1}t^{-\alpha}\,_2F_1\left(\alpha{+}\frac{\nu{-}1}{2},\alpha;2\alpha;1{-}\frac{x^2}{t}\right)f(\sqrt{t})dt.
$$
From comparing with  \eqref{Saigo1} it follows
$$
\gamma=2\alpha,\qquad \beta=\frac{\nu{-}1}{2}-\alpha,\qquad -\gamma-\beta=-\alpha-\frac{\nu{-}1}{2},\qquad \eta=-\alpha,
$$
that gives \eqref{Prop2}.

\textbf{Property 3.} If $\lim\limits_{x\rightarrow +\infty}g(x)=0$, $\lim\limits_{x\rightarrow +\infty}g'(x)=0$ then
$$
(B_{\nu,-}^{-1}B_\nu g)(x)=g(x).
$$
\textbf{Proof.}
Let consider $B_{\nu,b-}^{-\alpha}$ при $\alpha=1$:
$$
(B_{\nu,-}^{-1}f)(x)=\int\limits_x^\infty\left(\frac{y^2-x^2}{2y}\right)\,_2F_1\left(\frac{\nu+1}{2},1;2;1-\frac{x^2}{y^2}\right)f(y)dy.
$$
Using the equality for the Gauss hypergeometric function
$$
\,_2F_1\left(\frac{\nu+1}{2},1;2;1-\frac{x^2}{y^2}\right)=\frac{2}{1-\nu}\,\frac{y^2}{x^2-y^2}\left[\left(\frac{x}{y}\right)^{1-\nu}-1\right],
$$
the operator $B_{\nu,-}^{-1}$ may be written in the form
$$
(B_{\nu,-}^{-1}f)(x)=\frac{1}{\nu-1}\int\limits_x^\infty\, y\,\left[\left(\frac{x}{y}\right)^{1-\nu}-1\right] f(y)dy.
$$
Let $f(x)=B_\nu g(x)=g''(x)+\frac{\nu}{x}g'(x)$, then
$$
(B_{\nu,-}^{-1}f)(x)=(B_{\nu,-}^{-1}B_\nu g)(x)=$$
$$=\frac{1}{\nu-1}\int\limits_x^\infty y\left[\left(\frac{x}{y}\right)^{1-\nu}-1\right]\left(g''(y)+\frac{\nu}{y}g'(y)\right)dy=
$$
$$
=\frac{1}{\nu-1}\left[\int\limits_x^\infty y\left[\left(\frac{x}{y}\right)^{1-\nu}-1\right]g''(y)dy+\nu\int\limits_x^\infty\left[\left(\frac{x}{y}\right)^{1-\nu}-1\right]g'(y)dy\right].
$$
Twice differentiating the first term we derive
$$
\int\limits_x^\infty y\left[\left(\frac{x}{y}\right)^{1-\nu}{-}1\right]g''(y)dy{=}$$
$$=y\left[\left(\frac{x}{y}\right)^{1-\nu}{-}1\right]g'(y)\biggr|_{y=x}^{y=\infty}
{-}\int\limits_x^\infty(\nu x^{1-\nu}y^{\nu-1}{-}1)g'(y)dy=
$$
$$
=y\left[\left(\frac{x}{y}\right)^{1-\nu}{-}1\right]g'(y)\biggr|_{y=x}^{y=\infty}
-(\nu x^{1-\nu}y^{\nu-1}-1)g(y)\biggr|_{y=x}^{y=\infty}+$$
$$+\nu(\nu-1)x^{1-\nu}\int\limits_x^\infty y^{\nu-2}g(y)dy.
$$
Integrating by part the second term we obtain
$$
\int\limits_x^\infty\left[\left(\frac{x}{y}\right)^{1-\nu}-1\right]g'(y)dy=$$
$$=\left[\left(\frac{x}{y}\right)^{1-\nu}-1\right]g(y)
\biggr|_{y=x}^{y=\infty}-\frac{\nu-1}{x^{\nu-1}}\int\limits_x^\infty y^{\nu-2}g(y)dy.
$$
So it is obvious that for $\lim\limits_{x\rightarrow +\infty}g(x)=0$, $\lim\limits_{x\rightarrow +\infty}g'(x)=0$ it follows
$$
(B_{\nu,-}^{-1}B_\nu g)(x)=g(x).
$$

\textbf{Property 4.} When $x>0$ and $m+2\alpha+\nu<1$ we have a formula
\begin{equation}\label{Prop4}
B_{\nu,-}^{-\alpha}\,x^m=x^{2\alpha+m}\,2^{-2\alpha}\,\Gamma\left[
                                                           \begin{array}{cc}
                                                              $$-\alpha-\frac{m}{2},$$ & $$-\frac{\nu-1}{2}-\alpha-\frac{m}{2}$$ \\
                                                             $$\frac{1-\nu-m}{2},$$ & $$-\frac{m}{2}$$  \\
                                                           \end{array}
                                                         \right].
\end{equation}
\textbf{Proof.} Let us find a fractional Bessel operator acting on power function  $f(x){=}x^m$ for $x>0$ and $m+2\alpha+\nu<1$. Calculate
$$
B_{\nu,-}^{-\alpha}\,x^m=\frac{1}{\Gamma(2\alpha)}\int\limits_x^{\infty}\left(\frac{y^2-x^2}{2y}\right)^{2\alpha-1}\,_2F_1\left(\alpha+\frac{\nu-1}{2},\alpha;2\alpha;1-\frac{x^2}{y^2}\right)y^mdy=
$$
$$
=\left\{\frac{x^2}{y^2}=t,y=xt^{-\frac{1}{2}},dy=
-\frac{1}{2}xt^{-\frac{3}{2}}dt,y=x,t=1,y=\infty,t=0\right\}=$$
$${=}\frac{1}{2}\frac{1}{\Gamma(2\alpha)}\int\limits_0^{1}\left(\frac{x^2t^{-1}-x^2}{2xt^{-\frac{1}{2}}}\right)^{2\alpha-1}\,_2F_1\left(\alpha+\frac{\nu-1}{2},\alpha;2\alpha;1-t\right)(xt^{-\frac{1}{2}})^mxt^{-\frac{3}{2}}dt{=}
$$
$$=\frac{x^{2\alpha+m}}{2^{2\alpha}\Gamma(2\alpha)}\int\limits_0^{1}t^{-\alpha-\frac{m}{2}-1}(1-t)^{2\alpha-1}\,_2F_1\left(\alpha+\frac{\nu-1}{2},\alpha;2\alpha;1-t\right)dt.
$$
Now use the formulae 2.21.1.11, page 265, [8] of the form
$$
\int\limits_0^z x^{\mu-1}(z-x)^{c-1}\,_2F_1\left(a,b;c;1-\frac{x}{z}\right)dx=
$$
\begin{equation}\label{IR}
=z^{c+\mu-1}\Gamma\left[
                                                           \begin{array}{cc}
                                                             $$c,$$ & $$\mu,$$ \,\,\,\,\,\, $$c-a-b+\mu$$ \\
                                                             $$c-a+\mu,$$ & $$c-b+\mu$$  \\
                                                           \end{array}
                                                         \right],
\end{equation}
$$
z>0,\, {\rm Re}\,c>0,\, {\rm Re}\,(c-a-b+\mu)>0,
$$
for $B_{\nu,-}^{-\alpha}\,x^m$ we obtain
$$
z=1,\mu=-\alpha-\frac{m}{2},a=\alpha+\frac{\nu-1}{2},b=\alpha,c=2\alpha.
$$
Then, noticing that  $m+2\alpha+\nu<1$ we have
$$
  c-a-b+\mu=-\frac{\nu-1}{2}-\alpha-\frac{m}{2}>0,
$$
and, consequently, using \eqref{IR} we obtain
$$
B_{\nu,-}^{-\alpha}\,x^m=\frac{x^{2\alpha+m}}{2^{2\alpha}\Gamma(2\alpha)}\Gamma\left[
                                                           \begin{array}{cc}
                                                             $$2\alpha,$$ & $$-\alpha-\frac{m}{2},$$ \,\,\,\,\,\, $$-\frac{\nu-1}{2}-\alpha-\frac{m}{2}$$ \\
                                                             $$\frac{1-\nu-m}{2},$$ & $$-\frac{m}{2}$$  \\
                                                           \end{array}
                                                         \right].
$$
After simplifying we get \eqref{Prop4}.

\section{Integral Mellin transform of fractional Bessel integral  on  a semiaxis  and semigroup property}

In this section we obtain the formula of Mellin transform of fractional Bessel integral  on  a semiaxis $B_{\nu,-}^{-\alpha}$.

We use a standard definition of the Mellin transform for a function  $f$
 $$
 Mf(s)=f^*(s)=\int\limits_0^\infty x^{s-1}f(x)dx.
 $$

\textbf{Theorem 1.} Let $\alpha>0$. The Mellin transform applied to a fractional Bessel operator on a semiaxis $B_{\nu,-}^{-\alpha}$ is
\begin{equation}\label{Mellin2}
  ((B_{\nu,-}^{-\alpha}f)(x))^*(s)=\frac{1}{2^{2\alpha}}\,\,\Gamma\left[\begin{array}{cc}
$$\frac{s}{2},$$ & $$\frac{s}{2}-\frac{\nu-1}{2}$$ \\
$$\alpha+\frac{s}{2}-\frac{\nu-1}{2},$$ & $$\alpha+\frac{s}{2}$$  \\
                                                           \end{array}
                                                         \right] f^*(2\alpha+s).
\end{equation}
 \textbf{Proof.}
We have
$$
 ((B_{\nu,-}^{-\alpha}f)(x))^*(s)=\int\limits_0^\infty x^{s-1}(B_{\nu,-}^{-\alpha}f)(x)dx=\frac{1}{\Gamma(2\alpha)}\int\limits_0^\infty x^{s-1}dx\times$$
 $$\times\int\limits_x^{+\infty}\left(\frac{y^2-x^2}{2y}\right)^{2\alpha-1}\,_2F_1\left(\alpha+\frac{\nu-1}{2},\alpha;2\alpha;1-\frac{x^2}{y^2}\right)f(y)dy=
 $$
 $$
 =\frac{1}{\Gamma(2\alpha)}\int\limits_0^\infty f(y)(2y)^{1-2\alpha}dy\times
 $$
 $$\times\int\limits_0^y (y^2-x^2)^{2\alpha-1}\,_2F_1\left(\alpha+\frac{\nu-1}{2},\alpha;2\alpha;1-\frac{x^2}{y^2}\right) x^{s-1}dx.
 $$
First let find an internal integral
 $$
 I=\int\limits_0^y (y^2-x^2)^{2\alpha-1}\,_2F_1\left(\alpha+\frac{\nu-1}{2},\alpha;2\alpha;1-\frac{x^2}{y^2}\right) x^{s-1}dx=$$
 $$=\frac{1}{2}\int\limits_0^{y^2} (y^2-x)^{2\alpha-1}\,_2F_1\left(\alpha+\frac{\nu-1}{2},\alpha;2\alpha;1-\frac{x}{y^2}\right) x^{\frac{s}{2}-1}dx.
 $$
Then using \eqref{IR} we get
$$
z=y^2>0,\, c=2\alpha>0,\, a=\alpha+\frac{\nu-1}{2},\, b=\alpha,\,\alpha=\frac{s}{2},$$
$$c-a-b+\alpha=\frac{s}{2}-\frac{\nu-1}{2}>0\Rightarrow s>\nu-1.
$$
It follows
$$
I=\frac{y^{4\alpha+s-2}}{2}\,\,\Gamma\left[\begin{array}{cc}
$$2\alpha,$$ & $$\frac{s}{2},$$ \,\,\,\,\,\, $$\frac{s}{2}-\frac{\nu-1}{2}$$ \\
$$\alpha+\frac{s}{2}-\frac{\nu-1}{2},$$ & $$\alpha+\frac{s}{2}$$  \\
                                                           \end{array}
                                                         \right]
$$
and
  $$
 ((B_{\nu,-}^{-\alpha}f)(x))^*(s)=$$
 $$=\frac{1}{2}\,\,\Gamma\left[\begin{array}{cc}
 $$\frac{s}{2},$$ & $$\frac{s}{2}-\frac{\nu-1}{2}$$ \\
$$\alpha+\frac{s}{2}-\frac{\nu-1}{2},$$ & $$\alpha+\frac{s}{2}$$  \\
                                                           \end{array}
                                                         \right]\int\limits_0^\infty f(y)(2y)^{1-2\alpha}y^{4\alpha+s-2}dy=
 $$
$$
=\frac{1}{2^{2\alpha}}\,\,\Gamma\left[\begin{array}{cc}
 $$\frac{s}{2},$$ & $$\frac{s}{2}-\frac{\nu-1}{2}$$ \\
$$\alpha+\frac{s}{2}-\frac{\nu-1}{2},$$ & $$\alpha+\frac{s}{2}$$  \\
                                                           \end{array}
                                                         \right]\int\limits_0^\infty f(y)y^{2\alpha+s-1}dy=
$$
$$
=\frac{1}{2^{2\alpha}}\,\,\Gamma\left[\begin{array}{cc}
 $$\frac{s}{2},$$ & $$\frac{s}{2}-\frac{\nu-1}{2}$$ \\
$$\alpha+\frac{s}{2}-\frac{\nu-1}{2},$$ & $$\alpha+\frac{s}{2}$$  \\
                                                           \end{array}
                                                         \right] f^*(2\alpha+s).
$$
That completes the proof of the Theorem 1.

\textbf{Theorem 2.}  For $\alpha,\beta>0$  the semigroup property for the fractional Bessel integral is true
\begin{equation}\label{SemiGroup3}
    B_{\nu,-}^{-\alpha}B_{\nu,-}^{-\beta}f= B_{\nu,-}^{-(\alpha+\beta)}f.
\end{equation}

 \textbf{Proof.} Let $g(y)=(B_{\nu,-}^{-\beta}f)(y)$. Using \eqref{Mellin2} we derive
$$
 (B_{\nu,-}^{-\alpha}[(B_{\nu,-}^{-\beta}f)(y)](x))^*(s)=((B_{\nu,-}^{-\alpha}g)(x))^*(s)=
 $$
 $$
 =\frac{1}{2^{2\alpha}}\,\,\Gamma\left[\begin{array}{cc}
  $$\frac{s}{2},$$ & $$\frac{s}{2}-\frac{\nu-1}{2}$$ \\
$$\alpha+\frac{s}{2}-\frac{\nu-1}{2},$$ & $$\alpha+\frac{s}{2}$$  \\
                                                           \end{array}
                                                         \right] g^*(2\alpha+s)=
 $$
 $$
 = \frac{1}{2^{2\alpha}}\,\,\Gamma\left[\begin{array}{cc}
 $$\frac{s}{2},$$ & $$\frac{s}{2}-\frac{\nu-1}{2}$$ \\
$$\alpha+\frac{s}{2}-\frac{\nu-1}{2},$$ & $$\alpha+\frac{s}{2}$$  \\
                                                           \end{array}
                                                         \right]
 (B_{\nu,-}^{-\beta}f)^*(2\alpha+s)=
$$
 $$
 = \frac{1}{2^{2\alpha}}\,\,\Gamma\left[\begin{array}{cc}
 $$\frac{s}{2},$$ & $$\frac{s}{2}-\frac{\nu-1}{2}$$ \\
$$\alpha+\frac{s}{2}-\frac{\nu-1}{2},$$ & $$\alpha+\frac{s}{2}$$  \\
                                                           \end{array}
                                                         \right]\times$$
                                                         $$\times\frac{1}{2^{2\beta}}\,\,\Gamma\left[\begin{array}{cc}
 $$\frac{2\alpha+s}{2},$$ & $$\frac{2\alpha+s}{2}-\frac{\nu-1}{2}$$ \\
$$\beta+\frac{2\alpha+s}{2}-\frac{\nu-1}{2},$$ & $$\beta+\frac{2\alpha+s}{2}$$  \\
                                                           \end{array}
                                                         \right]
f^*(2\alpha+2\beta+s).
$$
From the other side
$$
(B_{\nu,-}^{-(\alpha+\beta)}f)^*(s)=$$
$$=\frac{1}{2^{2(\alpha+\beta)}}\,\,\Gamma\left[\begin{array}{cc}
 $$\frac{s}{2},$$ & $$\frac{s}{2}-\frac{\nu-1}{2}$$ \\
$$\alpha+\beta+\frac{s}{2}-\frac{\nu-1}{2},$$ & $$\alpha+\beta+\frac{s}{2}$$  \\
                                                           \end{array}
                                                         \right] f^*(2\alpha+2\beta+s).
$$
And use the formulae
$$
\frac{1}{2^{2\alpha}}\Gamma\left[\begin{array}{cc}
 $$\frac{s}{2},$$ & $$\frac{s}{2}-\frac{\nu-1}{2}$$ \\
$$\alpha+\frac{s}{2}-\frac{\nu-1}{2},$$ & $$\alpha+\frac{s}{2}$$  \\
                                                           \end{array}
                                                         \right]{\times}\frac{1}{2^{2\beta}}\,\,\Gamma\left[\begin{array}{cc}
 $$\frac{2\alpha+s}{2},$$ & $$\frac{2\alpha+s}{2}-\frac{\nu-1}{2}$$ \\
$$\beta+\frac{2\alpha+s}{2}-\frac{\nu-1}{2},$$ & $$\beta+\frac{2\alpha+s}{2}$$  \\
                                                           \end{array}
                                                         \right]{=}
$$
$$
{=}\frac{1}{2^{2(\alpha+\beta)}}\,\,\Gamma\left[\begin{array}{cc}
 $$\frac{s}{2},$$ & $$\frac{s}{2}-\frac{\nu-1}{2}$$ \\
$$\alpha+\beta+\frac{s}{2}-\frac{\nu-1}{2},$$ & $$\alpha+\beta+\frac{s}{2}$$  \\
                                                           \end{array}
                                                         \right].
$$
we prove the semigroup property \eqref{SemiGroup3}, which sometimes is called "the index law".

As a conclusion let us mention that fractional Bessel operators of the form \eqref{Bess3} may be applied to a study of fractional order differential equations of different types. They are also important in the theory of transmutations, cf. [10]--[15].

Differential equations with formal powers of Bessel operators are studied as models for particle random walks [16]--[17]. Different powers of hyperbolic operators connected to Bessel operators are studied in [19]--[23], cf. also [24]--[25].

\begin{center}

~

REFERENCES

~

\end{center}

\newpage

\begin{center}
\LARGE
О ДРОБНЫХ СТЕПЕНЯХ ОПЕРАТОРА БЕССЕЛЯ НА ПОЛУОСИ\\
Ситник С.М., Шишкина Э.Л.
\end{center}

~

\thanks{Опубликовано: Сибирские Электронные Математические Известия, 2018, Том 15 (2018), С. 1--10.\\
DOI 10.17377/semi.2018.15.001\\
http://semr.math.nsc.ru/v15/p1-10.pdf}
\section{Введение}

~

\begin{quote}
\noindent{\sc Аннотация. } В статье рассматриваются явные интегральные представления для дробных степеней оператора Бесселя на полуоси. Для них доказаны представления ядер через функции Лежандра, установлены связи с операторами дробного интегродифференцирования Римана--Лиувилля и Сайго, вычислено преобразование Меллина и установлено полугрупповое свойство (индексные законы). Кратко намечены приложения к дифференциальным уравнениям дробного порядка с дробными степенями операторов Бесселя.
\medskip
\end{quote}

 Мы рассматриваем вещественные степени сингулярного дифференциального оператора Бесселя
\begin{equation}\label{Bess}
B_\nu= D^2+\frac{\nu}{x}D,\qquad \nu\geq 0
\end{equation}
на вещественной полуоси $(0,\infty)$.

\begin{definition}
Пусть $\alpha>0$, $f(x)\in C^{[2\alpha]+1}(0,\infty)$. Дробную степень оператора Бесселя на полуоси $(0,\infty)$ определим, следуя работам  \cite{Ida}--\cite{Kosovo}, формулой
$$
(IB_{\nu,-}^{\alpha}\,f)(x){=}
$$
\begin{equation}\label{Bess3}
=\frac{1}{\Gamma(2\alpha)}\int\limits_x^{+\infty}\left(\frac{y^2-x^2}{2y}
\right)^{2\alpha-1}\,_2F_1\left(\alpha+\frac{\nu-1}{2},\alpha;2\alpha;1-\frac{x^2}{y^2}\right)f(y)dy.
\end{equation}
Для краткости будем также называть выражение \eqref{Bess3} \textbf{дробным интегралом Бесселя на полуоси}.
\end{definition}

В работе \cite{McBride}  введены пространства, приспособленные для работы с операторами вида \eqref{Bess3}:
$$
F_p=\left\{\varphi\in C^\infty(0,\infty):x^k\frac{d^k\varphi}{dx^k}\in L^p(0,\infty)\,\, {\text{для}}\,k=0,1,2,...\right\},\qquad 1\leq p<\infty,
$$
$$
F_\infty=\left\{\varphi\in C^\infty(0,\infty):x^k\frac{d^k\varphi}{dx^k}\rightarrow0 \,\, {\text{при}}\, x\rightarrow0+
 {\text{,\,\,и при}}\, x\rightarrow\infty\,{\text{для}}\,k=0,1,2,...\right\}
$$
и
$$
F_{p,\mu}=\left\{\varphi: x^{-\mu}\varphi(x)\in F_p\right\},\qquad 1\leq p\leq \infty,\qquad \mu\in\mathbb{C}.
$$
Кроме того, в \cite{McBride}  доказано, что \eqref{Bess3} имеет обратный оператор.

\begin{definition}
 \textbf{Дробную производную Бесселя на полуоси} определим равенством
\begin{equation}\label{DrobessDer}
(DB_{\nu,-}^\alpha f)(x)=B_\nu^n(IB_{\nu,-}^{n-\alpha}f)(x),\qquad \alpha>0.
\end{equation}
\end{definition}

Можно показать, что на подходящем классе функций $DB_{\nu,-}^\alpha$ есть левый обратный оператор к $IB_{\nu,-}^\alpha$.

В этой работе мы получим для интегрального оператора \eqref{Bess3} формулу, выражающую его ядро через функции Лежандра; сведем   дробный интеграл Бесселя на полуоси к дробному интегралу Лиувилля при $\nu=0$, а также установим связь дробного интеграла Бесселя \eqref{Bess3} с дробным интегралом Сайго; установим условия, при которых оператор \eqref{Bess3} есть левый обратный к \eqref{Bess}; найдем преобразование Меллина оператора \eqref{Bess3} и докажем полугрупповое свойство для дробного интеграла Бесселя на полуоси; а также решим уравнение с дробной производной Бесселя на полуоси.

 Используя формулу, связывающую гипергеометрическую функцию Гаусса и функцию Лежандра вида
$$
_2F_1(a,b;2b;z)=2^{2b-1}\Gamma\left(b+\frac{1}{2}\right)\, z^{\frac{1}{2}-b}(1-z)^{\frac{1}{2}\left(b-a-\frac{1}{2}\right)}P_{a-b-\frac{1}{2}}^{\frac{1}{2}-b}
\left[\left(1-\frac{z}{2}\right)\sqrt{1-z}\right]
$$
(см. формулу 15.4.8 на стр. 561 из \cite{Abramowitz}), мы получим
$$
\,_2F_1\left(\alpha+\frac{\nu-1}{2},\alpha;2\alpha;1-\frac{x^2}{y^2}\right)=
$$
$$
=2^{2\alpha-1}\Gamma\left(\alpha+\frac{1}{2}\right)\, \left(\frac{y^2-x^2}{y^2}\right)^{\frac{1}{2}-\alpha}\left(\frac{y}{x}
\right)^{\frac{\nu}{2}}P_{\frac{\nu}{2}-1}^{\frac{1}{2}-\alpha}
\left[\frac{1}{2}\left(\frac{x}{y}+\frac{y}{x}\right)\right]
$$
и сможем записать \eqref{Bess3} в виде
$$
(B_{\nu,-}^{-\alpha}f)(x)
=\frac{\Gamma\left(\alpha+\frac{1}{2}\right)}{\Gamma(2\alpha)}\int\limits_x^b(y^2-x^2)^{\alpha-\frac{1}{2}}
\,\left(\frac{y}{x}\right)^{\frac{\nu}{2}}P_{\frac{\nu}{2}-1}^{\frac{1}{2}-\alpha}\left[\frac{1}{2}
\left(\frac{x}{y}+\frac{y}{x}\right)\right]f(y)dy.
$$

Выражение дробных интегралов Бесселя через функции Лежандра является полезным и является упрощением первоначального определения, так как гипергеометрическая функция Гаусса зависит от трёх параметров, а функция Лежандра --- от двух.

В статье изучается дробный интеграл Бесселя в форме \eqref{Bess3}, который в частном случае соответствует дробному интегралу Лиувилля. Существует также версия дробного интеграла Бесселя на конечном отрезке с интегрированием по промежутку $(0,x)$, которая в частном случае является дробным интегралом Римана--Лиувилля, а также их дальнейшие модификации, см. \cite{Sita1}--\cite{Kosovo}.

Определение \eqref{Bess3} дано выше при ограничениях на функцию, близким к оптимальным в указанном классе, однако далее мы будем для простоты предполагать, что рассматриваются бесконечно дифференцируемые функции, финитные на полуоси, то есть их носитель  есть ${\rm supp}\,{f(x)}=[a,b], 0<a<b<\infty$.

\section{Основные свойства дробных степей оператора Бесселя на полуоси}

В этом пункте мы получим основные свойства оператора \eqref{Bess3}, во-первых, демонстрирующие связь дробного интеграла Бесселя на полуоси с дробным интегралом Лиувилля и с дробным интегралом Сайго, во-вторых, показывающие, что при дополнительных условиях оператор \eqref{Bess3} при $\alpha=1$ обращает оператор Бесселя \eqref{Bess}, и, наконец, найдем дробный интеграл Бесселя на полуоси \eqref{Bess3} от степенной функции.

\begin{proposition} При $\nu=0$ дробный интеграл Бесселя на полуоси $B_{0,-}^{-\alpha}$ сводится к  дробному интегралу Лиувилля, определённому формулой (5.3) стр. 85 из \cite{KK}, а именно, справедлива формула
$$
(IB_{0,-}^{\alpha}f)(x)=\frac{1}{\Gamma(2\alpha)}\int\limits_x^{\infty}(y-x)^{2\alpha-1}f(y)dy=(I_{-}^{2\alpha}f)(x).
$$
\end{proposition}
\begin{proof} Действительно, имеем
  $$(IB_{0,-}^{\alpha}f)(x)=\frac{1}{\Gamma(2\alpha)}\int\limits_x^\infty\left(\frac{y^2-x^2}{2y}\right)^{2\alpha-1}
  \,_2F_1\left(\alpha-\frac{1}{2},\alpha;2\alpha;1-\frac{x^2}{y^2}\right)f(y)dy.
 $$
Используя формулу, которая  получается из интегрального представления гипергеометрической функции Гаусса
$$
\,_2F_1\left(\alpha-\frac{1}{2},\alpha;2\alpha;1-\frac{x^2}{y^2}\right)=\left[\frac{2y}{x+y}\right]^{2\alpha-1},
$$
мы получим
$$
(IB_{0,-}^{\alpha}f)(x)=\frac{1}{\Gamma(2\alpha)}\int\limits_x^\infty(y-x)^{2\alpha-1}f(y)dy=(I_{-}^{2\alpha}f)(x).
$$
\end{proof}

\begin{proposition} Имеет место равенство
\begin{equation}\label{Prop2}
(IB_{\nu,-}^{\alpha}f)(x)=\frac{1}{2^{2\alpha}}J_{x^2}^{2\alpha,\frac{\nu{-}1}{2}-\alpha,-\alpha}\left(x^{\frac{\nu{-}1}{2}}f(\sqrt{x})\right),
\end{equation}
где
\begin{equation}\label{Saigo1}
J_{x}\,^{\gamma,\beta,\eta}f(x)=\frac{1}{\Gamma(\gamma)}\int\limits_x^\infty(t-x)^{\gamma-1}t^{-\gamma-\beta}\,_2F_1\left(\gamma+\beta,-\eta;\gamma;1-\frac{x}{t}\right)f(t)dt,
\end{equation}
--- дробный интеграл Сайго   (см. \cite{Saigo}, \cite{Repin}). В \eqref{Saigo1} $\gamma>0,\beta,\theta$ --- вещественные числа.
\end{proposition}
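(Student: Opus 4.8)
The plan is to reduce the fractional Bessel integral \eqref{Bess3} directly to the Saigo form \eqref{Saigo1} by a single change of variable that squares the integration variable; once that is done the two kernels coincide, provided the correct power weight is attached to the function rather than to the kernel.

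First I would substitute $y^2=t$ in \eqref{Bess3}, so that $y=\sqrt t$, $dy=\tfrac12 t^{-1/2}dt$, and the limits $y=x$, $y=\infty$ become $t=x^2$, $t=\infty$. The three $y$-dependent factors transform as $\left(\tfrac{y^2-x^2}{2y}\right)^{2\alpha-1}=(t-x^2)^{2\alpha-1}\,2^{1-2\alpha}\,t^{(1-2\alpha)/2}$, the hypergeometric argument $1-\tfrac{x^2}{y^2}$ becomes $1-\tfrac{x^2}{t}$, and $f(y)=f(\sqrt t)$. Collecting the constants $2^{1-2\alpha}\cdot\tfrac12=2^{-2\alpha}$ and the powers of $t$, namely $t^{(1-2\alpha)/2}\cdot t^{-1/2}=t^{-\alpha}$, this turns \eqref{Bess3} into
$$
(IB_{\nu,-}^{\alpha}f)(x)=\frac{1}{2^{2\alpha}\Gamma(2\alpha)}\int\limits_{x^2}^\infty(t-x^2)^{2\alpha-1}t^{-\alpha}\,_2F_1\left(\alpha+\frac{\nu-1}{2},\alpha;2\alpha;1-\frac{x^2}{t}\right)f(\sqrt t)\,dt.
$$

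Next I would read off the Saigo parameters by matching this against \eqref{Saigo1} evaluated at the point $x^2$. The exponent $\gamma-1=2\alpha-1$ forces $\gamma=2\alpha$; the hypergeometric parameters give $\gamma+\beta=\alpha+\tfrac{\nu-1}{2}$ and $-\eta=\alpha$, hence $\beta=\tfrac{\nu-1}{2}-\alpha$ and $\eta=-\alpha$; and the argument $1-x^2/t$ matches automatically. The one point that requires care --- and the only step that is not pure bookkeeping --- is the power of $t$: Saigo's kernel carries $t^{-\gamma-\beta}=t^{-\alpha-\frac{\nu-1}{2}}$, whereas the transformed integral carries only $t^{-\alpha}$. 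The mismatch is precisely the factor $t^{\frac{\nu-1}{2}}$, which must therefore be absorbed into the function. Writing $t^{-\alpha}=t^{-\gamma-\beta}\cdot t^{\frac{\nu-1}{2}}$ and attaching $t^{\frac{\nu-1}{2}}$ to $f(\sqrt t)$ produces exactly the argument $t^{\frac{\nu-1}{2}}f(\sqrt t)$ of $J_{x^2}^{\gamma,\beta,\eta}$, which is what accounts for the weight $x^{\frac{\nu-1}{2}}$ in the statement.

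Finally, with $\gamma=2\alpha$, $\beta=\tfrac{\nu-1}{2}-\alpha$, $\eta=-\alpha$, the prefactor $\tfrac{1}{\Gamma(2\alpha)}=\tfrac{1}{\Gamma(\gamma)}$ is absorbed into the definition of the Saigo operator while the constant $2^{-2\alpha}$ stays outside, yielding \eqref{Prop2}. I do not anticipate any analytic obstacle: the whole argument is an exact substitution followed by identification of parameters, and convergence of every integral is guaranteed by the standing hypothesis that $f$ is smooth with compact support in $(0,\infty)$, so all manipulations are legitimate.
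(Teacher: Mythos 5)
Your proposal is correct and follows essentially the same route as the paper: the substitution $y^2=t$ in \eqref{Bess3}, yielding the kernel $(t-x^2)^{2\alpha-1}t^{-\alpha}\,{}_2F_1\left(\alpha+\tfrac{\nu-1}{2},\alpha;2\alpha;1-\tfrac{x^2}{t}\right)$ with the constant $2^{-2\alpha}$, followed by reading off $\gamma=2\alpha$, $\beta=\tfrac{\nu-1}{2}-\alpha$, $\eta=-\alpha$. Your explicit remark that the discrepancy $t^{-\alpha}=t^{-\gamma-\beta}\cdot t^{\frac{\nu-1}{2}}$ is what forces the weight onto the function is a useful clarification of a step the paper leaves implicit, but it is the same argument.
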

\begin{proof}
Произведем замену переменной $y^2=t$ в дробном интеграле Бесселя на полуоси \eqref{Bess3}, получим:
$$
(IB_{\nu,-}^{\alpha}f)(x)
=\frac{1}{2^{2\alpha}\Gamma(2\alpha)}\int\limits_{x^2}^\infty(t{-}x^2)^{2\alpha-1}t^{-\alpha}\,_2F_1\left(\alpha{+}\frac{\nu{-}1}{2},\alpha;2\alpha;1{-}\frac{x^2}{t}\right)f(\sqrt{t})dt.
$$
Сравнивая полученное выражение с \eqref{Saigo1}, будем иметь
$$
\gamma=2\alpha,\qquad \beta=\frac{\nu{-}1}{2}-\alpha,\qquad -\gamma-\beta=-\alpha-\frac{\nu{-}1}{2},\qquad \eta=-\alpha,
$$
что и дает \eqref{Prop2}.
\end{proof}

\begin{proposition} При $\lim\limits_{x\rightarrow +\infty}g(x)=0$, $\lim\limits_{x\rightarrow +\infty}g'(x)=0$ получим, что
$$
(IB_{\nu,-}^{-1}B_\nu g)(x)=g(x).
$$
\end{proposition}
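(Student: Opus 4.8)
The plan is to reduce everything to the single case $\alpha=1$ of the defining integral \eqref{Bess3} and then to integrate by parts, discarding the endpoint contributions by means of the decay hypotheses on $g$. First I would specialise the kernel: putting $\alpha=1$ turns the Gauss factor into $\,_2F_1\!\left(\frac{\nu+1}{2},1;2;1-\frac{x^2}{y^2}\right)$, which is elementary because the lower parameter $c=2$ exceeds the numerator parameter $b=1$ by one. Invoking the closed form $\,_2F_1(a,1;2;z)=\dfrac{1-(1-z)^{1-a}}{(1-a)\,z}$ with $a=\frac{\nu+1}{2}$ and $z=1-\frac{x^2}{y^2}$, the hypergeometric factor collapses, and after multiplication by $\left(\frac{y^2-x^2}{2y}\right)$ the operator reduces to the explicit form
\[
(IB_{\nu,-}^{1}f)(x)=\frac{1}{\nu-1}\int\limits_x^\infty y\left[\left(\frac{x}{y}\right)^{1-\nu}-1\right]f(y)\,dy,
\]
whose kernel $K(x,y)=x^{1-\nu}y^{\nu}-y$ conveniently vanishes on the diagonal $y=x$.

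The decisive move I would make is to write the Bessel operator in divergence (formally self-adjoint) form, $B_\nu g=y^{-\nu}\frac{d}{dy}\!\left(y^{\nu}g'\right)$, before integrating by parts. The payoff is that the combination $K(x,y)\,y^{-\nu}=x^{1-\nu}-y^{1-\nu}$ is strikingly simple and still vanishes at $y=x$. Substituting $f=B_\nu g$ and setting $u(y)=y^{\nu}g'(y)$, the integral becomes $\int_x^\infty\bigl(x^{1-\nu}-y^{1-\nu}\bigr)u'(y)\,dy$; one integration by parts yields a boundary term $\bigl[(x^{1-\nu}-y^{1-\nu})u(y)\bigr]_x^\infty$ together with $-\int_x^\infty\frac{d}{dy}\bigl(x^{1-\nu}-y^{1-\nu}\bigr)u(y)\,dy=-(\nu-1)\int_x^\infty g'(y)\,dy$. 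The boundary term drops at $y=x$ precisely because $x^{1-\nu}-y^{1-\nu}$ does there, and at $y=\infty$ by the hypotheses $g,g'\to0$ (automatic for the compactly supported functions singled out in the introduction), while the surviving integral telescopes to $-(\nu-1)\bigl(g(\infty)-g(x)\bigr)=(\nu-1)g(x)$. Dividing by $\nu-1$ then returns exactly $g(x)$.

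The main obstacle I expect is bookkeeping at the endpoints together with the degenerate parameter $\nu=1$. If one differentiates the raw kernel $K$ directly, rather than using the divergence form, the single integration by parts splits into two pieces coming from $g''$ and $\frac{\nu}{y}g'$, and these generate intermediate boundary contributions (note $\partial_y K|_{y=x}=\nu-1\neq0$) that must cancel against one another; casting $B_\nu$ in self-adjoint form is exactly what makes those cancellations automatic and keeps the computation transparent. The value $\nu=1$ is excluded by the factor $\frac{1}{\nu-1}$ and has to be treated separately, since there the Gauss function degenerates to a logarithm, $\,_2F_1(1,1;2;z)=-z^{-1}\ln(1-z)$, and the reduction above breaks down; I would recover this case either as a limit $\nu\to1$ or by a direct computation with the resulting logarithmic kernel. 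Finally I would check that the endpoint terms at infinity are killed by the decay of $g$ and $g'$ — rigorously by the compact-support convention adopted in the introduction, under which these contributions vanish identically.
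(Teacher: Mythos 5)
Your reduction of the $\alpha=1$ kernel is exactly the paper's: the same closed form of $\,_2F_1\bigl(\frac{\nu+1}{2},1;2;z\bigr)$ yields the same explicit operator $\frac{1}{\nu-1}\int_x^\infty y\bigl[(x/y)^{1-\nu}-1\bigr]f(y)\,dy$, and from there both arguments are an integration by parts killed at the endpoints by the decay of $g$ and $g'$. Where you genuinely diverge is in how that integration by parts is organised. The paper keeps $B_\nu g=g''+\frac{\nu}{y}g'$ split into two summands, integrates the $g''$ piece by parts twice and the $g'$ piece once, and then relies on the resulting boundary terms and the two surviving integrals $\nu(\nu-1)x^{1-\nu}\int_x^\infty y^{\nu-2}g\,dy$ and $-(\nu-1)x^{1-\nu}\int_x^\infty y^{\nu-2}g\,dy$ (times $\nu$) cancelling against each other. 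You instead rewrite $B_\nu g=y^{-\nu}\frac{d}{dy}(y^\nu g')$ and observe that $K(x,y)y^{-\nu}=x^{1-\nu}-y^{1-\nu}$, so a single integration by parts with $u=y^\nu g'$ does the whole job: the diagonal boundary term vanishes because the reduced kernel does, and the remaining integral telescopes directly to $(\nu-1)g(x)$. This buys a shorter computation with no cancellations to track, at the cost of nothing; it is the cleaner argument. Two of your side remarks also improve on the paper: you correctly flag that $\nu=1$ is degenerate (the factor $\frac{1}{\nu-1}$ and the power $(x/y)^{1-\nu}$ both break down and the kernel becomes logarithmic, a case the paper passes over in silence), and you correctly note that the boundary term at infinity involves $y^\nu g'(y)$, so the bare hypotheses $g\to0$, $g'\to0$ are not literally sufficient and one really leans on the compact-support convention of the introduction -- the paper's ``it is obvious'' elides the same point.
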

\begin{proof}
Рассмотрим $IB_{\nu,-}^{\alpha}$ при $\alpha=1$:
$$
(IB_{\nu,-}^{1}f)(x)=\int\limits_x^\infty\left(\frac{y^2-x^2}{2y}\right)\,_2F_1\left(\frac{\nu+1}{2},1;2;1-\frac{x^2}{y^2}\right)f(y)dy.
$$
Поскольку для гипергеометрической функции Гаусса справедливо равенство
$$
\,_2F_1\left(\frac{\nu+1}{2},1;2;1-\frac{x^2}{y^2}\right)=\frac{2}{1-\nu}\,\frac{y^2}{x^2-y^2}\left[\left(\frac{x}{y}\right)^{1-\nu}-1\right],
$$
то $IB_{\nu,-}^{1}$ можно записать в виде
$$
(IB_{\nu,-}^{1}f)(x)=\frac{1}{\nu-1}\int\limits_x^\infty\, y\,\left[\left(\frac{x}{y}\right)^{1-\nu}-1\right] f(y)dy.
$$
Пусть $f(x)=B_\nu g(x)=g''(x)+\frac{\nu}{x}g'(x)$, тогда
$$
(IB_{\nu,-}^{1}f)(x)=(B_{\nu,-}^{-1}B_\nu g)(x)=$$
$$=\frac{1}{\nu-1}\int\limits_x^\infty y\left[\left(\frac{x}{y}\right)^{1-\nu}-1\right]\left(g''(y)+\frac{\nu}{y}g'(y)\right)dy=
$$
$$
=\frac{1}{\nu-1}\left[\int\limits_x^\infty y\left[\left(\frac{x}{y}\right)^{1-\nu}-1\right]g''(y)dy+\nu\int\limits_x^\infty\left[\left(\frac{x}{y}\right)^{1-\nu}-1\right]g'(y)dy\right].
$$
Дважды интегрируя по частям первое слагаемое, получим
$$
\int\limits_x^\infty y\left[\left(\frac{x}{y}\right)^{1-\nu}{-}1\right]g''(y)dy{=}$$
$$=y\left[\left(\frac{x}{y}\right)^{1-\nu}{-}1\right]g'(y)\biggr|_{y=x}^{y=\infty}
{-}\int\limits_x^\infty(\nu x^{1-\nu}y^{\nu-1}{-}1)g'(y)dy=
$$
$$
=y\left[\left(\frac{x}{y}\right)^{1-\nu}{-}1\right]g'(y)\biggr|_{y=x}^{y=\infty}
-(\nu x^{1-\nu}y^{\nu-1}-1)g(y)\biggr|_{y=x}^{y=\infty}+$$
$$+\nu(\nu-1)x^{1-\nu}\int\limits_x^\infty y^{\nu-2}g(y)dy.
$$
Интегрируя по частям второе слагаемое, будем иметь
$$
\int\limits_x^\infty\left[\left(\frac{x}{y}\right)^{1-\nu}-1\right]g'(y)dy=$$
$$=\left[\left(\frac{x}{y}\right)^{1-\nu}-1\right]g(y)
\biggr|_{y=x}^{y=\infty}-\frac{\nu-1}{x^{\nu-1}}\int\limits_x^\infty y^{\nu-2}g(y)dy.
$$
Тогда, очевидно, что при $\lim\limits_{x\rightarrow +\infty}g(x)=0$, $\lim\limits_{x\rightarrow +\infty}g'(x)=0$ получим
$$
(IB_{\nu,-}^{1}B_\nu g)(x)=g(x).
$$
\end{proof}

\begin{proposition} При $x>0$ и $m+2\alpha+\nu<1$ справедлива формула
\begin{equation}\label{Prop4}
IB_{\nu,-}^{\alpha}\,x^m=2^{-2\alpha}\,\Gamma\left[
                                                           \begin{array}{cc}
                                                              $$-\alpha-\frac{m}{2},$$ & $$-\frac{\nu-1}{2}-\alpha-\frac{m}{2}$$ \\
                                                             $$\frac{1-\nu-m}{2},$$ & $$-\frac{m}{2}$$  \\
                                                           \end{array}
                                                         \right]\, x^{2\alpha+m}.
\end{equation}
\end{proposition}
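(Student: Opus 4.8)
The plan is to evaluate the operator \eqref{Bess3} directly on the power function $f(y)=y^m$ and then reduce the resulting definite integral to a tabulated beta--type integral of the Gauss hypergeometric function. First I would substitute $f(y)=y^m$ into \eqref{Bess3}, obtaining
$$
IB_{\nu,-}^{\alpha}\,x^m=\frac{1}{\Gamma(2\alpha)}\int\limits_x^{\infty}\left(\frac{y^2-x^2}{2y}\right)^{2\alpha-1}{}_2F_1\left(\alpha+\frac{\nu-1}{2},\alpha;2\alpha;1-\frac{x^2}{y^2}\right)y^m\,dy.
$$

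Next I would perform the change of variable $t=x^2/y^2$ (equivalently $y=xt^{-1/2}$), which maps the semiaxis $y\in(x,\infty)$ onto $t\in(0,1)$ and turns the argument $1-x^2/y^2$ of the hypergeometric function into $1-t$. Collecting the powers of $x$ and of $2$ that factor out of the integrand, this brings the integral to the standard form
$$
IB_{\nu,-}^{\alpha}\,x^m=\frac{x^{2\alpha+m}}{2^{2\alpha}\Gamma(2\alpha)}\int\limits_0^{1}t^{-\alpha-\frac{m}{2}-1}(1-t)^{2\alpha-1}{}_2F_1\left(\alpha+\frac{\nu-1}{2},\alpha;2\alpha;1-t\right)dt,
$$
in which the entire $x$--dependence has already separated as the prefactor $x^{2\alpha+m}$, matching the power asserted in \eqref{Prop4}.

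Then I would apply the integral evaluation 2.21.1.11 from \cite{IR3} with $z=1$ and parameters $\mu=-\alpha-\frac{m}{2}$, $a=\alpha+\frac{\nu-1}{2}$, $b=\alpha$, $c=2\alpha$. A short computation of the four entries of the resulting gamma quotient gives $c-a-b+\mu=-\frac{\nu-1}{2}-\alpha-\frac{m}{2}$, $c-a+\mu=\frac{1-\nu-m}{2}$, and $c-b+\mu=-\frac{m}{2}$, while the numerator entry $c=2\alpha$ produces a factor $\Gamma(2\alpha)$ that cancels the $\Gamma(2\alpha)$ already present in the prefactor. After this cancellation the claimed formula \eqref{Prop4} follows immediately.

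The only genuine obstacle is the verification of the hypotheses under which 2.21.1.11 applies, namely $\mathrm{Re}\,c>0$ and $\mathrm{Re}(c-a-b+\mu)>0$. The first is automatic from $\alpha>0$; the second reads $-\frac{\nu-1}{2}-\alpha-\frac{m}{2}>0$, which is exactly the stated restriction $m+2\alpha+\nu<1$. Thus the condition in the statement is precisely what guarantees convergence of the integral near $t=0$ (equivalently $y=\infty$) and the applicability of the tabulated formula, so no further estimates are needed.
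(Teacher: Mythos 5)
Your proposal is correct and follows exactly the paper's own argument: the same substitution $t=x^2/y^2$ reducing the operator to the integral $\frac{x^{2\alpha+m}}{2^{2\alpha}\Gamma(2\alpha)}\int_0^1 t^{-\alpha-\frac{m}{2}-1}(1-t)^{2\alpha-1}\,{}_2F_1(\alpha+\frac{\nu-1}{2},\alpha;2\alpha;1-t)\,dt$, followed by formula 2.21.1.11 of Prudnikov--Brychkov--Marichev with the identical parameter choices and the same verification that $m+2\alpha+\nu<1$ is precisely the condition ${\rm Re}(c-a-b+\mu)>0$. No differences worth noting.
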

\begin{proof} Найдем дробный интеграл Бесселя на полуоси от степенной функции  $f(x){=}x^m$ при $x>0$ и $m+2\alpha+\nu<1$. Имеем
$$
IB_{\nu,-}^{\alpha}\,x^m=\frac{1}{\Gamma(2\alpha)}\int\limits_x^{\infty}\left(\frac{y^2-x^2}{2y}\right)^{2\alpha-1}\,_2F_1\left(\alpha+\frac{\nu-1}{2},\alpha;2\alpha;1-\frac{x^2}{y^2}\right)y^mdy=
$$
$$
=\left\{\frac{x^2}{y^2}=t,y=xt^{-\frac{1}{2}},dy=
-\frac{1}{2}xt^{-\frac{3}{2}}dt,y=x,t=1,y=\infty,t=0\right\}=$$
$${=}\frac{1}{2}\frac{1}{\Gamma(2\alpha)}\int\limits_0^{1}\left(\frac{x^2t^{-1}-x^2}{2xt^{-\frac{1}{2}}}\right)^{2\alpha-1}\,_2F_1\left(\alpha+\frac{\nu-1}{2},\alpha;2\alpha;1-t\right)(xt^{-\frac{1}{2}})^mxt^{-\frac{3}{2}}dt{=}
$$
$$=\frac{x^{2\alpha+m}}{2^{2\alpha}\Gamma(2\alpha)}\int\limits_0^{1}t^{-\alpha-\frac{m}{2}-1}(1-t)^{2\alpha-1}\,_2F_1\left(\alpha+\frac{\nu-1}{2},\alpha;2\alpha;1-t\right)dt.
$$
Используя формулу 2.21.1.11 из \cite{IR3}, стр. 265 вида
$$
\int\limits_0^z x^{\mu-1}(z-x)^{c-1}\,_2F_1\left(a,b;c;1-\frac{x}{z}\right)dx=
$$
\begin{equation}\label{IR}
=z^{c+\mu-1}\Gamma\left[
                                                           \begin{array}{cc}
                                                             $$c,$$ & $$\mu,$$ \,\,\,\,\,\, $$c-a-b+\mu$$ \\
                                                             $$c-a+\mu,$$ & $$c-b+\mu$$  \\
                                                           \end{array}
                                                         \right],
\end{equation}
$$
z>0,\, {\rm Re}\,c>0,\, {\rm Re}\,(c-a-b+\mu)>0,
$$
для $B_{\nu,-}^{-\alpha}\,x^m$ мы будем иметь
$$
z=1,\mu=-\alpha-\frac{m}{2},a=\alpha+\frac{\nu-1}{2},b=\alpha,c=2\alpha.
$$
Тогда, учитывая, что $m+2\alpha+\nu<1$  справедливо неравенство
$$
  c-a-b+\mu=-\frac{\nu-1}{2}-\alpha-\frac{m}{2}>0,
$$
и, следовательно, воспользовавшись \eqref{IR}, получим
$$
IB_{\nu,-}^{\alpha}\,x^m=\frac{x^{2\alpha+m}}{2^{2\alpha}\Gamma(2\alpha)}\Gamma\left[
                                                           \begin{array}{cc}
                                                             $$2\alpha,$$ & $$-\alpha-\frac{m}{2},$$ \,\,\,\,\,\, $$-\frac{\nu-1}{2}-\alpha-\frac{m}{2}$$ \\
                                                             $$\frac{1-\nu-m}{2},$$ & $$-\frac{m}{2}$$  \\
                                                           \end{array}
                                                         \right]
$$
что после упрощения дает \eqref{Prop4}.
\end{proof}

Отметим важность полученной формулы, устанавливающей, что дробный интеграл Бесселя переводит одну степенную функцию в другую, так как это позволяет распространить его определение на произвольные степенные ряды. Явный вид константы в формуле \eqref{Prop4} в форме отношения гамма--функций показывает, что дробный интеграл Бесселя является оператором дробного дифференцирования типа Гельфонда--Леонтьева \cite{KK}.

\section{Интегральное преобразование Меллина дробных степей оператора Бесселя на полуоси и полугруповое свойство}

В этом пункте выведем  формулу преобразования Меллина от дробного интеграла Бесселя на полуоси $B_{\nu,-}^{-\alpha}$.

Преобразование Меллина функции $f$ определяется формулой
 $$
 Mf(s)=f^*(s)=\int\limits_0^\infty x^{s-1}f(x)dx.
 $$

\begin{theorem} Пусть $\alpha>0$. Преобразования Меллина от дробного интеграла и дробной производной Бесселя на полуоси имеют вид
\begin{equation}\label{Mellin2}
  ((IB_{\nu,-}^{\alpha}f)(x))^*(s)=\frac{1}{2^{2\alpha}}\,\,\Gamma\left[\begin{array}{cc}
$$\frac{s}{2},$$ & $$\frac{s}{2}-\frac{\nu-1}{2}$$ \\
$$\alpha+\frac{s}{2}-\frac{\nu-1}{2},$$ & $$\alpha+\frac{s}{2}$$  \\
                                                           \end{array}
                                                         \right] f^*(2\alpha+s),
\end{equation}
\begin{equation}\label{MellinD}
  ((DB_{\nu,-}^{\alpha}f)(x))^*(s)=2^{2\alpha}\,\Gamma\left[\begin{array}{cc}
$$\frac{s}{2},$$ & $$\frac{s}{2}-\frac{\nu-1}{2}$$ \\
$$\frac{s}{2}-\alpha-\frac{\nu-1}{2},$$ & $$\frac{s}{2}-\alpha$$  \\
\end{array} \right] f^*(s-2\alpha).
\end{equation}
\end{theorem}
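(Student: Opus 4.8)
The plan is to establish the two identities separately. The identity \eqref{Mellin2} follows from a single--integral computation: substitute \eqref{Bess3} into $\int_0^\infty x^{s-1}(IB_{\nu,-}^{\alpha}f)(x)\,dx$, interchange the order of integration (Fubini applies on the working class), and after the substitution $x^2\mapsto x$ reduce the inner integral to \eqref{IR} with $z=y^2$, $c=2\alpha$, $a=\alpha+\frac{\nu-1}{2}$, $b=\alpha$ and $\mu=\frac s2$. The hypothesis ${\rm Re}\,(c-a-b+\mu)>0$ of \eqref{IR} becomes $s>\nu-1$, and after collecting the surviving power of $y$ the outer integral is $f^*(2\alpha+s)$, which yields \eqref{Mellin2}.

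For \eqref{MellinD} I would start from the definition \eqref{DrobessDer}, writing $DB_{\nu,-}^{\alpha}f=B_\nu^{\,n}\,IB_{\nu,-}^{n-\alpha}f$ with $n=[\alpha]+1$ (any integer $n>\alpha$ will do). The first ingredient is the elementary Mellin rule for $B_\nu$ itself: two integrations by parts, with the boundary contributions annihilated by the decay built into the class $F_{p,\mu}$ recorded above, give
$$M[B_\nu g](s)=(s-2)(s-1-\nu)\,g^*(s-2).$$
Iterating this $n$ times produces two finite products which telescope into gamma quotients,
$$M[B_\nu^{\,n} g](s)=2^{2n}\,\frac{\Gamma\!\left(\frac s2\right)\Gamma\!\left(\frac s2-\frac{\nu-1}{2}\right)}{\Gamma\!\left(\frac s2-n\right)\Gamma\!\left(\frac s2-n-\frac{\nu-1}{2}\right)}\,g^*(s-2n).$$

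Next I would take $g=IB_{\nu,-}^{n-\alpha}f$ and insert its Mellin transform, which is \eqref{Mellin2} with $\alpha$ replaced by $n-\alpha$, evaluated at the shifted argument $s-2n$; since $\frac{s-2n}{2}=\frac s2-n$, all the arguments line up. Multiplying the two factors, the gammas $\Gamma\!\left(\frac s2-n\right)$ and $\Gamma\!\left(\frac s2-n-\frac{\nu-1}{2}\right)$ cancel, the powers of two combine as $2^{2n}\cdot2^{-2(n-\alpha)}=2^{2\alpha}$, and the image of $f$ is $f^*\bigl(2(n-\alpha)+s-2n\bigr)=f^*(s-2\alpha)$. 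The outcome is precisely \eqref{MellinD}; note that every $n$--dependent quantity has disappeared, which at the same time shows that \eqref{DrobessDer} does not depend on the chosen integer $n$.

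The computation for \eqref{Mellin2} is routine once \eqref{IR} is in hand. The real work lies in \eqref{MellinD}, and the main obstacle there is the bookkeeping: one must track the repeated shifts of the argument by $2$, convert the two products generated by $B_\nu^{\,n}$ into the correct ratios of gamma functions, and verify that all $n$--dependent factors and powers of two cancel. A secondary point to check is that the boundary terms in the integration by parts for $M[B_\nu g]$ genuinely vanish when $g=IB_{\nu,-}^{n-\alpha}f$; this is guaranteed by the mapping properties of \eqref{Bess3} on $F_{p,\mu}$.
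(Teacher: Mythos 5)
Your proposal is correct and follows essentially the same route as the paper: Fubini plus the tabulated integral \eqref{IR} (with $z=y^2$, $\mu=\tfrac{s}{2}$) for the first identity, and, for the second, composing the iterated Mellin rule for $B_\nu$ with the first identity taken at $\alpha\mapsto n-\alpha$, $s\mapsto s-2n$. The only cosmetic difference is that you package the products $\prod_{k=1}^{n}\bigl(\tfrac{s}{2}-k\bigr)$ directly as $\Gamma\bigl(\tfrac{s}{2}\bigr)/\Gamma\bigl(\tfrac{s}{2}-n\bigr)$, whereas the paper writes them as $\Gamma\bigl(n+1-\tfrac{s}{2}\bigr)/\Gamma\bigl(1-\tfrac{s}{2}\bigr)$ and then uses the reflection formula to arrive at the same quotient.
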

\begin{proof}
Имеем
$$
 ((IB_{\nu,-}^{\alpha}f)(x))^*(s)=\int\limits_0^\infty x^{s-1}(IB_{\nu,-}^{\alpha}f)(x)dx=\frac{1}{\Gamma(2\alpha)}\int\limits_0^\infty x^{s-1}dx\times$$
 $$\times\int\limits_x^{+\infty}\left(\frac{y^2-x^2}{2y}\right)^{2\alpha-1}\,_2F_1\left(\alpha+\frac{\nu-1}{2},\alpha;2\alpha;1-\frac{x^2}{y^2}\right)f(y)dy=
 $$
 $$
 =\frac{1}{\Gamma(2\alpha)}\int\limits_0^\infty f(y)(2y)^{1-2\alpha}dy\times
 $$
 $$\times\int\limits_0^y (y^2-x^2)^{2\alpha-1}\,_2F_1\left(\alpha+\frac{\nu-1}{2},\alpha;2\alpha;1-\frac{x^2}{y^2}\right) x^{s-1}dx.
 $$
Найдем внутренний интеграл
 $$
 I=\int\limits_0^y (y^2-x^2)^{2\alpha-1}\,_2F_1\left(\alpha+\frac{\nu-1}{2},\alpha;2\alpha;1-\frac{x^2}{y^2}\right) x^{s-1}dx=$$
 $$=\frac{1}{2}\int\limits_0^{y^2} (y^2-x)^{2\alpha-1}\,_2F_1\left(\alpha+\frac{\nu-1}{2},\alpha;2\alpha;1-\frac{x}{y^2}\right) x^{\frac{s}{2}-1}dx.
 $$
Используя  формулу \eqref{IR}, получим
$$
z=y^2>0,\, c=2\alpha>0,\, a=\alpha+\frac{\nu-1}{2},\, b=\alpha,\,\alpha=\frac{s}{2},$$
$$c-a-b+\alpha=\frac{s}{2}-\frac{\nu-1}{2}>0\Rightarrow s>\nu-1.
$$
Тогда
$$
I=\frac{y^{4\alpha+s-2}}{2}\,\,\Gamma\left[\begin{array}{cc}
$$2\alpha,$$ & $$\frac{s}{2},$$ \,\,\,\,\,\, $$\frac{s}{2}-\frac{\nu-1}{2}$$ \\
$$\alpha+\frac{s}{2}-\frac{\nu-1}{2},$$ & $$\alpha+\frac{s}{2}$$  \\
                                                           \end{array}
                                                         \right]
$$
и
  $$
 ((IB_{\nu,-}^{\alpha}f)(x))^*(s)=$$
 $$=\frac{1}{2}\,\,\Gamma\left[\begin{array}{cc}
 $$\frac{s}{2},$$ & $$\frac{s}{2}-\frac{\nu-1}{2}$$ \\
$$\alpha+\frac{s}{2}-\frac{\nu-1}{2},$$ & $$\alpha+\frac{s}{2}$$  \\
                                                           \end{array}
                                                         \right]\int\limits_0^\infty f(y)(2y)^{1-2\alpha}y^{4\alpha+s-2}dy=
 $$
$$
=\frac{1}{2^{2\alpha}}\,\,\Gamma\left[\begin{array}{cc}
 $$\frac{s}{2},$$ & $$\frac{s}{2}-\frac{\nu-1}{2}$$ \\
$$\alpha+\frac{s}{2}-\frac{\nu-1}{2},$$ & $$\alpha+\frac{s}{2}$$  \\
                                                           \end{array}
                                                         \right]\int\limits_0^\infty f(y)y^{2\alpha+s-1}dy=
$$
$$
=\frac{1}{2^{2\alpha}}\,\,\Gamma\left[\begin{array}{cc}
 $$\frac{s}{2},$$ & $$\frac{s}{2}-\frac{\nu-1}{2}$$ \\
$$\alpha+\frac{s}{2}-\frac{\nu-1}{2},$$ & $$\alpha+\frac{s}{2}$$  \\
                                                           \end{array}
                                                         \right] f^*(2\alpha+s).
$$

Найдем теперь преобразование Меллина от дробной производной Бесселя на полуоси
$$
((DB_\nu^\alpha f)(x))^*(s)=((B_\nu^n(IB_{\nu,-}^{n-\alpha}f(x))^*(s)=
$$
$$
=2^{2n}\Gamma\left[
               \begin{array}{cc}
                 n+1-\frac{s}{2} & \frac{1-s+\nu}{2}+n \\
                 1-\frac{s}{2} & \frac{1-s+\nu}{2} \\
               \end{array}
             \right]((IB_{\nu,-}^{n-\alpha}  f(x))^*(s-2n)=
$$
$$
=2^{2n}\Gamma\left[
               \begin{array}{cc}
                 n+1-\frac{s}{2} & \frac{1-s+\nu}{2}+n \\
                 1-\frac{s}{2} & \frac{1-s+\nu}{2} \\
               \end{array}
             \right]\times
$$
$$
\times\frac{1}{2^{2(n-\alpha)}}\,\,\Gamma\left[\begin{array}{cc}
$$\frac{s}{2}-n,$$ & $$\frac{s}{2}-n-\frac{\nu-1}{2}$$ \\
$$n-\alpha+\frac{s}{2}-n-\frac{\nu-1}{2},$$ & $$n-\alpha+\frac{s}{2}-n$$  \\
\end{array} \right] (f(x))^*(s-2\alpha)=
$$
$$
=2^{2\alpha}\Gamma\left[
               \begin{array}{cc}
                 n+1-\frac{s}{2} & \frac{1-s+\nu}{2}+n \\
                 1-\frac{s}{2} & \frac{1-s+\nu}{2} \\
               \end{array}
             \right]\,\Gamma\left[\begin{array}{cc}
$$\frac{s}{2}-n,$$ & $$\frac{s}{2}-n-\frac{\nu-1}{2}$$ \\
$$\frac{s}{2}-\alpha-\frac{\nu-1}{2},$$ & $$\frac{s}{2}-\alpha$$  \\
\end{array} \right] (f(x))^*(s-2\alpha).
$$
Применяя формулу
$$
 \Gamma (1-z)\Gamma (z)={\pi \over \sin {(\pi z)}},\qquad z\not \in \mathbb {Z},
 $$
 в числителе получим
$$
\Gamma\left(1+n-\frac{s}{2}\right)\Gamma\left(\frac{s}{2}-n\right)=\frac{\pi}{ \sin (\frac{s}{2}-n)\pi}=\frac{(-1)^n\pi}{ \sin (\frac{s}{2})\pi},
$$
$$
\Gamma\left(\frac{1-s+\nu}{2}+n\right)\Gamma\left(\frac{s-\nu+1}{2}-n\right)=\Gamma\left(1-\frac{1-s+\nu}{2}-n\right)\Gamma\left(\frac{1-s+\nu}{2}+n\right)=$$
$$=\frac{\pi}{ \sin (\frac{1-s+\nu}{2}+n)\pi}=
\frac{(-1)^n\pi}{ \sin (\frac{1-s+\nu}{2})\pi}.
$$
Таким образом,
$$
\frac{(-1)^n\pi}{\Gamma\left(\frac{1-s+\nu}{2}\right) \sin (\frac{1-s+\nu}{2})\pi}=(-1)^n\Gamma\left(\frac{1+s-\nu}{2}\right),
$$
$$
\frac{(-1)^n\pi}{\Gamma\left(1-\frac{s}{2}\right) \sin (\frac{s}{2})\pi}=(-1)^n\Gamma\left(\frac{s}{2}\right).
$$
Доказательство закончено.
\end{proof}

\begin{theorem} Для дробного интеграла и дробной производной Бесселя на полуоси при $\alpha,\beta>0$ справедливо полугрупповое свойство
\begin{equation}\label{SemiGroup3}
    IB_{\nu,-}^{-\alpha}IB_{\nu,-}^{\beta}f= IB_{\nu,-}^{\alpha+\beta}f,
\end{equation}
\begin{equation}\label{SemiGroup4}
    DB_{\nu,-}^{\alpha}DB_{\nu,-}^{\beta}f= DB_{\nu,-}^{\alpha+\beta}f.
\end{equation}
\end{theorem}
\begin{proof} Пусть $g(y)=(IB_{\nu,-}^{\beta}f)(y)$. Используя \eqref{Mellin2}, мы получим
$$
 (IB_{\nu,-}^{\alpha}[(IB_{\nu,-}^{\beta}f)(y)](x))^*(s)=((IB_{\nu,-}^{\alpha}g)(x))^*(s)=
 $$
 $$
 =\frac{1}{2^{2\alpha}}\,\,\Gamma\left[\begin{array}{cc}
  $$\frac{s}{2},$$ & $$\frac{s}{2}-\frac{\nu-1}{2}$$ \\
$$\alpha+\frac{s}{2}-\frac{\nu-1}{2},$$ & $$\alpha+\frac{s}{2}$$  \\
                                                           \end{array}
                                                         \right] g^*(2\alpha+s)=
 $$
 $$
 = \frac{1}{2^{2\alpha}}\,\,\Gamma\left[\begin{array}{cc}
 $$\frac{s}{2},$$ & $$\frac{s}{2}-\frac{\nu-1}{2}$$ \\
$$\alpha+\frac{s}{2}-\frac{\nu-1}{2},$$ & $$\alpha+\frac{s}{2}$$  \\
                                                           \end{array}
                                                         \right]
 (IB_{\nu,-}^{\beta}f)^*(2\alpha+s)=
$$
 $$
 = \frac{1}{2^{2\alpha}}\,\,\Gamma\left[\begin{array}{cc}
 $$\frac{s}{2},$$ & $$\frac{s}{2}-\frac{\nu-1}{2}$$ \\
$$\alpha+\frac{s}{2}-\frac{\nu-1}{2},$$ & $$\alpha+\frac{s}{2}$$  \\
                                                           \end{array}
                                                         \right]\times$$
                                                         $$\times\frac{1}{2^{2\beta}}\,\,\Gamma\left[\begin{array}{cc}
 $$\frac{2\alpha+s}{2},$$ & $$\frac{2\alpha+s}{2}-\frac{\nu-1}{2}$$ \\
$$\beta+\frac{2\alpha+s}{2}-\frac{\nu-1}{2},$$ & $$\beta+\frac{2\alpha+s}{2}$$  \\
                                                           \end{array}
                                                         \right]
f^*(2\alpha+2\beta+s).
$$
С другой стороны
$$
(IB_{\nu,-}^{\alpha+\beta}f)^*(s)=$$
$$=\frac{1}{2^{2(\alpha+\beta)}}\,\,\Gamma\left[\begin{array}{cc}
 $$\frac{s}{2},$$ & $$\frac{s}{2}-\frac{\nu-1}{2}$$ \\
$$\alpha+\beta+\frac{s}{2}-\frac{\nu-1}{2},$$ & $$\alpha+\beta+\frac{s}{2}$$  \\
                                                           \end{array}
                                                         \right] f^*(2\alpha+2\beta+s).
$$
Мы имеем верное равенство
$$
\frac{1}{2^{2\alpha}}\Gamma\left[\begin{array}{cc}
 $$\frac{s}{2},$$ & $$\frac{s}{2}-\frac{\nu-1}{2}$$ \\
$$\alpha+\frac{s}{2}-\frac{\nu-1}{2},$$ & $$\alpha+\frac{s}{2}$$  \\
                                                           \end{array}
                                                         \right]{\times}\frac{1}{2^{2\beta}}\,\,\Gamma\left[\begin{array}{cc}
 $$\frac{2\alpha+s}{2},$$ & $$\frac{2\alpha+s}{2}-\frac{\nu-1}{2}$$ \\
$$\beta+\frac{2\alpha+s}{2}-\frac{\nu-1}{2},$$ & $$\beta+\frac{2\alpha+s}{2}$$  \\
                                                           \end{array}
                                                         \right]{=}
$$
$$
{=}\frac{1}{2^{2(\alpha+\beta)}}\,\,\Gamma\left[\begin{array}{cc}
 $$\frac{s}{2},$$ & $$\frac{s}{2}-\frac{\nu-1}{2}$$ \\
$$\alpha+\beta+\frac{s}{2}-\frac{\nu-1}{2},$$ & $$\alpha+\beta+\frac{s}{2}$$  \\
                                                           \end{array}
                                                         \right].
$$
Таким образом, справедливо полугрупповое свойство \eqref{SemiGroup3}. Свойство \eqref{SemiGroup4} доказывается аналогично.
\end{proof}

Отметим, что в литературе по интегральным преобразованиям с параметром часто полугрупповое свойство называется индексным законом.

В заключение отметим, что конструкции дробных степеней оператора Бесселя вида \eqref{Bess3} и \eqref{DrobessDer} могут быть применены к исследованию различных дифференциальных уравнений дробного порядка, а также в теории операторов преобразования, см. \cite{Sita3}--\cite{Sita10}, \cite{SS1}--\cite{SS2}. Дифференциальное уравнение с  дробной степенью оператора Бесселя используется при моделировании случайного блуждания частицы (см. \cite{Garra1}--\cite{Garra2}). Другой подход к определению дробных степеней оператора Бесселя см. в \cite{McBride}. Исследованию различных степеней оператора гиперболического типа, содержащего операторы Бесселя, посвящены работы \cite{Sh1}--\cite{Sh4}.

\bigskip

\end{document}